\definecolor{dkgreen}{rgb}{0,0.6,0}
\definecolor{gray}{rgb}{0.5,0.5,0.5}
\definecolor{mauve}{rgb}{0.58,0,0.82}
\newtheorem*{corollary*}{Corollary}
\newtheorem*{conjecture*}{Conjecture}
\newtheorem*{example*}{Example}
\newtheorem*{theorem*}{Theorem}
\newtheorem*{proposition*}{Proposition}
\newtheorem{theorem}{Theorem}[section]
\newtheorem{corollary}[theorem]{Corollary}
\newtheorem{lemma}[theorem]{Lemma}
\newtheorem*{claim*}{Claim}
\newtheorem*{question}{Question}
\theoremstyle{definition}
\theoremstyle{remark}
\numberwithin{equation}{section}
\renewcommand*\env@matrix[1][\
arraystretch]{%
  \edef\arraystretch{#1}%
  \hskip -\arraycolsep
  \let\@ifnextchar\new@ifnextchar
  \array{*\c@MaxMatrixCols c}}
\renewcommand{\mod}{\operatorname{mod}}
\newcommand{\Ext}{\operatorname{Ext}}
\newcommand{\domdim}{\operatorname{domdim}}
\newcommand{\gldim}{\operatorname{gldim}}
\newcommand{\End}{\operatorname{End}}
\newcommand{\Hom}{\operatorname{Hom}}
\newcommand{\add}{\operatorname{\mathrm{add}}}
\begin{document}

\title{A Cluster tilting module for a representation-infinite block of a group algebra}
\date{\today}

\subjclass[2010]{Primary 16G10, 16E10}

\keywords{Cluster tilting modules, algebras of quaternion type, group algebras}

\author{Bernhard B\"ohmler}
\address{FB Mathematik, TU Kaiserslautern, Gottlieb-Daimler-Str. 48, 67653 Kaiserslautern, Germany}
\email{boehmler@mathematik.uni-kl.de}

\author{Ren\'{e} Marczinzik}
\address{Institute of algebra and number theory, University of Stuttgart, Pfaffenwaldring 57, 70569 Stuttgart, Germany}
\email{marczire@mathematik.uni-stuttgart.de}

\begin{abstract}
Let $G=SL(2,5)$ be the special linear group of $2 \times 2$-matrices with coefficients in the field with $5$ elements.
We show that the principal block over a splitting field $K$ of characteristic two of the group algebra $KG$ has a $3$-cluster tilting module.
This gives the first example of a representation-infinite block of a group algebra having a cluster tilting module and answers a question by Erdmann and Holm.
\end{abstract}

\maketitle

\section*{Introduction}
We assume that algebras are finite dimensional over a field $K$.
For $n \geq 1$, an $A$-module $M$ is called an \emph{$n$-cluster tilting module} if it satisfies:
\begin{align*}
    \add(M) &= \{X \in \mod-A \mid \Ext_A^i(M,X)=0 \ \text{for} \ 1 \leq i \leq n-1 \} \\
    &= \{X \in \mod-A \mid \Ext_A^i(X,M)=0  \ \text{for} \ 1 \leq i \leq n-1 \}.
\end{align*}
We remark that in some references like \cite{EH} an $n$-cluster tilting module is called an maximal $(n-1)$-orthogonal module.
The concept of $n$-cluster tilting modules was introduced by Iyama in \cite{Iya} and \cite{Iya4}. It has found several important applications, for instance in the theory of  cluster algebras, see \cite{GLS}.
Cluster tilting modules are especially important for selfinjective algebras where recent methods allow to construct many examples related to other structures in algebra and combinatorics, see for example \cite{DI} and \cite{CDIM}.
One of the most important classes of selfinjective algebras are group algebras. This leads to the following natural question:
\begin{question}
When does a block of a group algebra have a cluster tilting module?

\end{question}
In \cite{EH}, Erdmann and Holm showed that selfinjective algebras with a cluster tilting module have complexity at most one (recently it was shown in \cite{MV} that this result does not hold for non-selfinjective algebras). Erdmann and Holm used this result in \cite[Section 5.3]{EH} to show that a block of a group algebra can only have a cluster tilting module when it is representation-finite or Morita equivalent to an algebra of quaternion type. 

Every representation-finite block of a group algebra is derived equivalent to a symmetric Nakayama algebra and recently a complete classification for the existence of cluster tilting modules was obtained in \cite[Section 5]{DI} for selfinjective Nakayama algebras. 
For algebras of quaternion type (which are always of infinite representation type) it is however unknown whether they can have cluster tilting modules and this was posed in \cite[Section 5.3]{EH} as an open question.
There is no universal method to construct cluster tilting modules or to show that they do not exist. The fact that the classification of indecomposable modules for algebras of quaternion type is still not known makes the search and verification of cluster tilting modules especially hard. We remark that the existence of an $n$-cluster tilting module is a Morita invariant.
In this article we show that the algebra of quaternion type $Q(3 \mathcal{A})_2^2$ has a $3$-cluster tilting module. This algebra is Morita equivalent to the principal block of the group algebra of $SL(2,5)$ over a splitting field of characteristic two.
Our main result is as follows:
\begin{theorem*}
Let $A$ be the principal block of the group algebra $KG$ for $G=SL(2,5)$ over a splitting field of characteristic two.
Then $A$ has a $3$-cluster tilting module.
\end{theorem*}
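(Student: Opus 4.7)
The plan is as follows. Since being an $n$-cluster tilting module is a Morita invariant, I will work throughout with the basic algebra, which is the quaternion-type algebra $B := Q(3\mathcal{A})_2^2$ from Erdmann's classification. I would start by writing down the Gabriel quiver and defining relations of $B$, computing the three indecomposable projective--injective modules $P_1, P_2, P_3$ together with their Loewy series. Because $B$ is a tame symmetric algebra of quaternion type, every non-projective indecomposable $B$-module is $\Omega$-periodic, so the search for cluster tilting summands can be organised along the finite $\Omega$-orbits of indecomposables.

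Next I would construct an explicit candidate
\[
M = P_1 \oplus P_2 \oplus P_3 \oplus N_1 \oplus \cdots \oplus N_r,
\]
where $N_1,\ldots,N_r$ is a carefully chosen, $\Omega$-closed collection of non-projective indecomposables. The natural first attempt is to include certain syzygies of the simples together with string-like modules of small length, iterating under $\Omega$ until the resulting family is closed, and then shrinking the list so that $\add M$ is as small as possible while still containing the entire orbits that appear.

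The verification then splits into two parts. First, the Ext-rigidity $\Ext_B^i(M,M)=0$ for $i=1,2$. Using that $B$ is symmetric, the stable $\Hom$ computes $\Ext$ in positive degrees, so both vanishings reduce to finite-dimensional computations of morphism spaces between explicit summands of $M$ and their first two syzygies, which can be carried out directly from the presentation of $B$. Second, the maximality $\add M = \{X \in \mod\text{-}B \mid \Ext_B^i(M,X)=0 \text{ for } i=1,2\}$; for selfinjective algebras, once $\add M$ is closed under $\Omega$, the analogous containment with the arguments swapped follows automatically, so only one of the two equalities really requires work.

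The main obstacle is the maximality step: excluding any further indecomposable $X \notin \add M$ that is Ext-orthogonal to $M$ in degrees $1$ and $2$. Since $B$ is representation-infinite, brute enumeration is impossible. My strategy is to exploit the almost split sequence ending in such a hypothetical $X$: its middle term must lie in $\add M$, and combined with $\Omega$-periodicity and the known shape of the stable Auslander--Reiten components of a tame quaternion-type algebra, this pins $X$ down to finitely many candidates on specific tubes, each of which can then be excluded by a direct Ext-computation against the explicit summands of $M$. Making this last step rigorous requires a detailed grip on the AR-structure of $Q(3\mathcal{A})_2^2$, and this is the technically most demanding part of the proof.
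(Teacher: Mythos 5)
The central gap is the maximality step, which your outline does not actually resolve. Your reduction rests on the claim that for a hypothetical indecomposable $X\notin\add M$ satisfying $\Ext^i(M,X)=0$ for $i=1,2$, the middle term of the almost split sequence ending in $X$ must lie in $\add M$; there is no such general fact, and you cannot invoke properties of cluster tilting modules while still trying to prove that $M$ is one. The proposed fallback of pinning $X$ down on the stable Auslander--Reiten components is not currently actionable either: the indecomposable modules of algebras of quaternion type are not classified (the paper stresses exactly this point), and the stable components form one-parameter families of tubes, so no finite list of candidates can be extracted by inspection. The paper sidesteps this difficulty entirely by a different mechanism: it checks that the explicit module $M=A\oplus M_1\oplus M_2\oplus M_3\oplus M_4$ is a generator--cogenerator and then applies Iyama's Auslander correspondence (\cite[Theorem 2.6]{Iya2}), by which $M$ is $3$-cluster tilting if and only if $\End_A(M)$ has global dimension and dominant dimension equal to $4$. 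This converts both rigidity and maximality into a finite computation on the endomorphism algebra, carried out with QPA over the field with two elements, and a lemma on the behaviour of global and dominant dimension under field extensions transfers the conclusion to an arbitrary splitting field of characteristic two. Without an argument of this type, your maximality step remains open, as you yourself acknowledge.

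There is also a structural flaw in the construction principle. You propose to take the non-projective summands of $M$ to be an $\Omega$-closed family. Over this symmetric algebra every non-projective indecomposable is $\Omega$-periodic (the algebra is symmetric of period $4$), so if $\add M$ were stable under $\Omega$, then $\Ext^1_A(M,M)=0$ would propagate by dimension shifting to $\Ext^i_A(M,M)=0$ for all $i\geq 1$; but a periodic non-projective summand $N$ of period $p$ has $\Ext^p_A(N,N)\cong\underline{\End}_A(N)\neq 0$. Hence a rigid generator--cogenerator whose non-projective part is closed under $\Omega$ cannot exist here, and indeed the module used in the paper is not $\Omega$-stable. The related claim that one of the two orthogonality conditions ``follows automatically'' relied on this $\Omega$-stability and therefore also fails; in the paper both conditions are subsumed in the endomorphism-ring criterion, so neither has to be checked separately.
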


This gives the first example of a cluster tilting module of a representation-infinite block of a group algebra and it gives a positive answer to the question of Erdmann and Holm about the existence of cluster tilting modules for algebras of quaternion type.
We remark that we found the $3$-cluster tilting module in the main result by experimenting with the GAP-package \cite{QPA}. The proof also uses the calculation of quiver and relations for an endomorphism ring which was obtained with the aid of the computer.

\section{An example of a 3-cluster tilting module for the algebra of quaternion type $Q(2 \mathcal{A})_2^2$}
We assume that all algebras are finite dimensional over a field $K$ and all modules are finite dimensional right modules unless stated otherwise. $J$ will denote the Jacobson radical of an algebra and $D=\Hom_K(-,K)$ the natural duality. We assume that the reader is familiar with the basics of representation theory and homological algebra of finite dimensional algebras and refer for example to the textbook \cite{SY}. 

The \emph{global dimension} $\gldim A$ of an algebra $A$ is defined as the supremum of all projective dimensions of the simple $A$-modules. It is well known that the global dimension of $A$ coincides with the global dimension of the opposite algebra $A^{op}$, see for example \cite[Exercise 4.1.1]{W}.
The \emph{dominant dimension} $\domdim A$ of $A$ is defined as the minimal $n$ such that $I_n$ is not projective (or infinite if no such $n$ exists), where 
$$0 \rightarrow A \rightarrow I_0 \rightarrow I_1 \rightarrow \cdots $$
is a minimal injective coresolution of the regular $A$-module $A$.
The dominant dimension of $A$ coincides with the dominant dimension of the opposite algebra $A^{op}$, see \cite[Theorem 4]{M}.

We will also need the following lemma on the behaviour of the global and dominant dimension under extensions of the ground field. For a field extension $F$ of $K$, we denote by $A_F:=A \otimes_K F$ the $F$-algebra which is obtained from $A$ by the field extension.
\begin{lemma} \label{field extensionlemma}
Let $A$ be a finite dimensional algebra over the field $K$ and let $F$ be a field extension of $K$. 
\begin{enumerate}
\item $\domdim A=\domdim A_F$.
\item If $A/J$ is separable (where $J$ denotes the Jacobson radical of $A$), then
$\gldim A= \gldim A_F$.

\end{enumerate}

\end{lemma}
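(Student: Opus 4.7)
My plan is to reduce both parts to the faithful flatness of $F$ over $K$, which makes $-\otimes_K F$ exact and reflecting of exactness. An initial observation I would establish is the following: for a finite dimensional $A$-module $M$, $M$ is projective (respectively, injective) over $A$ if and only if $M_F$ is projective (respectively, injective) over $A_F$. The projective case is immediate from preservation of $\add A$ combined with faithful flatness; for injectivity, one uses that every finite dimensional injective $A$-module is of the form $D(P)$ for a projective $A^{op}$-module $P$, together with the identification $D(P)\otimes_K F \cong D_F(P_F)$.

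For part (1), I would work from the characterization that $\domdim A \ge n$ if and only if there exists an exact sequence
\[ 0 \to A \to M^0 \to M^1 \to \cdots \to M^{n-1} \]
of $A$-modules with each $M^i$ projective-injective; this follows from the minimal injective coresolution in one direction, and in the other direction from the fact that the minimal injective coresolution is a position-wise direct summand of any injective coresolution, so projective-injectivity of the first $n$ terms descends. Tensoring such a sequence with $F$ yields at once $\domdim A \le \domdim A_F$. The main obstacle is the reverse inequality, since the minimal injective coresolution of $A$ need not remain minimal after base change to $A_F$: the injective hull of a module does not in general commute with base change, precisely because $\soc$ is not preserved when $A/J$ is not separable. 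To circumvent this I would invoke a Mueller-style $\Ext$-theoretic reformulation: if $e\in A$ is an idempotent with $eA$ a basic minimal faithful projective-injective $A$-module and $B=eAe$, then $\domdim A$ is controlled by the vanishing of the groups $\Ext^i_B(eA,eA)$ in an appropriate range. Since $e$ remains a suitable idempotent over $A_F$ by the preservation properties above, and since these $\Ext$ groups commute with $-\otimes_K F$ by flatness, faithful flatness yields the required equality.

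For part (2), I would use the natural isomorphism $\Ext^i_{A_F}(M_F,N_F)\cong \Ext^i_A(M,N)\otimes_K F$ for finite dimensional $M,N$, obtained by tensoring a projective resolution of $M$ with $F$ and using the identification $\Hom_{A_F}(P_F,N_F)\cong\Hom_A(P,N)\otimes_K F$ for finitely generated projective $P$. Faithful flatness then gives $\pdim_A M = \pdim_{A_F} M_F$ for every $M$. Since $\gldim A$ is the supremum of $\pdim_A S$ over simple $A$-modules $S$, the inequality $\gldim A\le\gldim A_F$ is immediate. The separability of $A/J$ is needed for the other direction: it guarantees that $(A/J)\otimes_K F = A_F/J(A_F)$ is still semisimple, so every simple $A_F$-module $T$ is a direct summand of $S\otimes_K F$ for some simple $A$-module $S$; hence $\pdim_{A_F} T \le \pdim_{A_F} S_F = \pdim_A S \le \gldim A$, yielding $\gldim A_F\le\gldim A$ as well. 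Without separability a simple example such as $A=\mathbb{F}_p(t)[x]/(x^p-t)$ shows that the statement can fail, which is a useful sanity check on where exactly the separability hypothesis must enter.
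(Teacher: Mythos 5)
The paper itself proves this lemma only by citation (part (1) is \cite[Lemma 5]{M}, part (2) is \cite[Corollary 18]{ERZ}), so any self-contained argument is necessarily a different route. Your part (2) is correct and is essentially the Eilenberg--Rosenberg--Zelinsky argument: the base-change isomorphism $\Ext^i_{A_F}(M_F,N_F)\cong\Ext^i_A(M,N)\otimes_K F$ plus faithful flatness gives $\pdim_A S=\pdim_{A_F}S_F$, and separability of $A/J$ guarantees $J(A_F)=J\otimes_K F$ and that every simple $A_F$-module is a summand of some $S\otimes_K F$; your example $\mathbb{F}_p(t)[x]/(x^p-t)$ is the right sanity check.

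Part (1), however, has a genuine gap in the reverse inequality $\domdim A_F\le\domdim A$, and it stems from a false premise. Injective envelopes \emph{do} commute with arbitrary base field extension for finite dimensional algebras: since $J\otimes_K F\subseteq J(A_F)$, one has $\soc_{A_F}(M\otimes_K F)\subseteq \soc_A(M)\otimes_K F$ (the socle can only shrink), so an essential extension remains essential after applying $-\otimes_K F$; combined with your own observation that injectivity is preserved, this gives $E(M)\otimes_K F\cong E(M\otimes_K F)$. Hence the minimal injective coresolution of $A$ tensors to the \emph{minimal} injective coresolution of $A_F$, and faithfully flat descent of projectivity yields $\domdim A=\domdim A_F$ directly -- this is essentially the proof behind the cited Lemma 5 of M\"uller, and no separability is needed. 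The detour you propose instead does not close the gap: the M\"uller $\Ext$-criterion, $\domdim A=2+\sup\{n\mid \Ext^i_{eAe}(eA,eA)=0 \text{ for } 1\le i\le n\}$, is only available once one already knows $\domdim\ge 2$, since it presupposes the faithful projective-injective module $eA$ together with the double centralizer property (equivalently, that $eA$ is a generator-cogenerator over $eAe$). It therefore cannot establish the threshold implications $\domdim A_F\ge 1\Rightarrow\domdim A\ge 1$ and $\domdim A_F\ge 2\Rightarrow\domdim A\ge 2$, for which you give no argument; you would also need to verify that $eA\otimes_K F$ is still, up to $\add$, the minimal faithful projective-injective $A_F$-module before invoking the criterion on the $A_F$ side. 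As written, the equality of dominant dimensions is not proved; replacing this step by the socle inequality above repairs the argument and in fact simplifies it.
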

\begin{proof}
\leavevmode
\begin{enumerate}
\item See \cite[Lemma 5]{M}.
\item See \cite[Corollary 18]{ERZ}.
\end{enumerate}
\end{proof}
Recall that a module $M$ is a \emph{generator} of $\mod-A$ when every indecomposable projective $A$-module is a direct summand of $M$ and $M$ is a \emph{cogenerator} of $\mod-A$ when every indecomposable injective $A$-module is a direct summand of $M$. 
\begin{theorem} \label{clustertiltingtheorem}
Let $A$ be a non-semisimple connected finite dimensional algebra with an $A$-module $M$ that is a generator and cogenerator of $\mod-A$.
Then $M$ is an $n$-cluster tilting module if and only if $B:=\End_A(M)$ is a higher Auslander algebra of global dimension $n+1$, that is $B$ has global dimension equal to $n+1$ and dominant dimension equal to $n+1$.

\end{theorem}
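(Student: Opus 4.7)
The statement is Iyama's higher Auslander correspondence, and the plan is to derive it from two well-known ingredients: (i) the Morita–Tachikawa–Iyama correspondence, which says that for a generator-cogenerator $M$ one has $\domdim B \geq n+1$ if and only if $\Ext_A^i(M,M) = 0$ for $1 \leq i \leq n-1$; and (ii) the classical Morita equivalence $F := \Hom_A(M,-) \colon \add M \xrightarrow{\sim} \proj B$, valid because $M$ is a generator.

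For the direction from cluster tilting to higher Auslander, the Ext vanishing built into the cluster tilting definition gives $\domdim B \geq n+1$ via (i). To bound $\gldim B$, I would show that every $X \in \mod A$ admits an exact $\add M$-resolution
\[
0 \to M_n \to M_{n-1} \to \cdots \to M_0 \to X \to 0
\]
by iteratively taking right $\add M$-approximations: the cluster tilting condition guarantees that after $n$ steps the kernel lies in the Ext-orthogonal subcategory $\{Y : \Ext_A^i(M,Y)=0, \ 1 \leq i \leq n-1\}$, which coincides with $\add M$. Applying $F$, together with the Ext vanishings that make $F$ preserve exactness on such sequences, yields a length-$n$ projective resolution of $F(X)$ in $\mod B$; since an arbitrary $B$-module is the cokernel of a map between projectives coming via $F$ from $\add M$, we pay at most one extra step of resolution, hence $\gldim B \leq n+1$. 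Because $\domdim B$ is finite, $B$ is non-selfinjective, and $\domdim B \leq \gldim B$ forces both to equal $n+1$.

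For the converse, assume $\gldim B = \domdim B = n+1$. Ingredient (i) yields $\Ext_A^i(M,M) = 0$ for $1 \leq i \leq n-1$, which gives the inclusion of $\add M$ in each of the two subcategories appearing in the definition. For the reverse inclusion, take $X$ with $\Ext_A^i(M,X) = 0$ for $1 \leq i \leq n-1$ and iterate right $\add M$-approximations for $n-1$ steps, producing an exact sequence $0 \to K \to M_{n-1} \to \cdots \to M_0 \to X \to 0$ whose image under $F$ is a projective resolution of $F(X)$. The bound $\gldim B \leq n+1$ then forces $F(K)$ to be projective, so $K \in \add M$ by the equivalence (ii), and splicing gives $X \in \add M$. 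The symmetric orthogonality involving $\Ext_A^i(X,M)$ follows by applying the same argument to $A^{\op}$, using the left-right symmetry of $\gldim B$ and $\domdim B$ recalled in the preceding lemma.

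The main obstacle is the bookkeeping that passes $\add M$-resolutions in $\mod A$ through the functor $F$: the proof hinges on an induction that simultaneously tracks exactness in $\mod A$ via long exact sequences of $\Ext_A^*(M,-)$ and projectivity in $\mod B$, and it must verify that the relevant Ext-vanishing hypotheses propagate from one syzygy to the next. The secondary delicate point is the left-right symmetry; handling both Ext-orthogonalities of the cluster tilting definition simultaneously requires the Morita-invariance of $\gldim$ and $\domdim$, which is exactly what the preceding discussion of field extensions and opposite algebras supplies.
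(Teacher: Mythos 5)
The paper itself gives no argument here: it simply cites Iyama's higher Auslander correspondence (\cite[Theorem 2.6]{Iya2}), and your outline follows that standard route (M\"uller's theorem relating $\domdim \End_A(M)$ to the vanishing of $\Ext_A^i(M,M)$, plus passing $\add M$-(co)resolutions through $F=\Hom_A(M,-)$). Your forward direction is essentially the right argument, modulo bookkeeping: the kernel lands in $\add M$ after $n-1$ (not $n$) approximation steps, so $\pdim_B F(X)\le n-1$ for every $X\in\mod-A$, and an arbitrary $B$-module $Y$ costs \emph{two} extra steps, via the exact sequence $0\to F(\ker g)\to F(M_1)\to F(M_0)\to Y\to 0$, not one; with your stated counts ($\pdim_B F(X)\le n$ plus the cokernel construction) you would only obtain $\gldim B\le n+2$. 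These slips are fixable.

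The genuine gap is in the converse. You take $X$ with $\Ext_A^i(M,X)=0$ for $1\le i\le n-1$, build a right $\add M$-approximation resolution $0\to K\to M_{n-1}\to\cdots\to M_0\to X\to 0$, deduce that $F(K)$ is projective and $K\in\add M$, and conclude ``by splicing'' that $X\in\add M$. That last step is a non sequitur: an exact sequence all of whose terms other than $X$ lie in $\add M$ does not place $X$ in $\add M$ (to split $X$ off the last short exact piece you would need vanishing of $\Ext_A^1(X,-)$ against the kernel, i.e.\ the \emph{other} orthogonality condition, which is not assumed here). A telltale sign is that your argument never uses the hypothesis $\Ext_A^i(M,X)=0$ at all: right $\add M$-approximations exist and stay exact under $F$ for every $X$, so if the argument worked it would prove that every module lies in $\add M$, which is false for the representation-infinite algebra of this paper. (Also, with only $n-1$ or $n$ approximation steps, $F(K)$ is merely an $n$-th syzygy of $F(X)$, so $\gldim B\le n+1$ does not yet force it projective.) The correct mechanism, as in Iyama's proof, runs in the opposite direction: take an injective (hence $\add M$-) copresentation $0\to X\to I^0\to\cdots\to I^n$, use $\Ext_A^i(M,X)=0$ for $1\le i\le n-1$ to show that applying $F$ and appending the cokernel $Z$ gives an exact sequence $0\to F(X)\to F(I^0)\to\cdots\to F(I^n)\to Z\to 0$ with all $F(I^j)$ projective; then $\gldim B\le n+1$ forces the $(n+1)$-st syzygy $F(X)$ to be projective, and full faithfulness of $F$ (valid since $M$ is a generator) gives $X\in\add M$. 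The condition $\Ext_A^i(X,M)=0$ is then handled dually, with projective resolutions of $X$ and $\Hom_A(-,M)$, or by the opposite-algebra symmetry you invoke.
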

\begin{proof}
See \cite[Theorem 2.6]{Iya2} for an elementary proof.
\end{proof} 

We refer to \cite[Section VII]{E} for the precise definition of algebras of quaternion type, which arise in the study of blocks of group algebras with quaternion defect groups.
The tables starting at page $303$ of \cite{E} give quiver and relations of algebras of quaternion type.
In this article, we only need to know the algebra of quaternion type $Q(3 \mathcal{A})_2^2$ that we describe next. Let $K$ be a field of characterstic two.
Let $A=KQ/I$ be the following quiver algebra where $Q$ is given by
\begin{center}
	\begin{tikzpicture}
		\node (v1) at (-4,0) {$\bullet^{1}$};
		\node (v2) at (-2,0) {$\bullet^{2}$};
		\node (v3) at (0,0) {$\bullet^{3}$};
		
		\draw [-open triangle 45] (v1) edge[bend right] node[below] {$b$}  (v2);
		\draw [-open triangle 45] (v2) edge[bend right] node[above] {$y$}  (v1);
		\draw [-open triangle 45] (v3) edge[bend right] node[above] {$n$}  (v2);
		\draw [-open triangle 45] (v2) edge[bend right] node[below] {$d$}  (v3);
	\end{tikzpicture}
\end{center}
and the relations are given by
$$I=\langle byb-bdnybdn,yby-dnybdny,ndn-nybdnyb,dnd-ybdnybd,bybd,ndny\rangle.$$

This is the algebra of quaternion type $Q(3 \mathcal{A})_2^2$ and this algebra is Morita equivalent to the principal block of the the group algebra $FG$ where $G=SL(2,5)$ is the special linear group of $2 \times 2$-matrices over the field with five elements and $F$ is a splitting field of characteristic two, see for example page $110$ of \cite{H} and section $7$ of \cite{E2}.
The algebra $A$ is symmetric of period $4$ and $\dim_K(A)=36$.
The dimension vectors of the indecomposable projective $A$-modules $P_1, P_2$ and $P_3$ are respectively given by~$[4,4,2],$~$[4,8,4]$  and~$[2,4,4]$. We define the following $A$-modules:
\begin{enumerate}
\item Let $M_1= e_3 A/nA$, which has dimension vector $[0,0,1]$.
\item Let $M_2=e_3 A/nybdnyA$, which has dimension vector $[1,3,3]$.
\item Let $M_3=e_3A/ nyA$, which has dimension vector $[0,1,2]$.
\item Let $M_4=e_2A/yA$, which has dimension vector $[1,4,2]$.
\end{enumerate}
Let $M:=A \oplus M_1 \oplus M_2 \oplus M_3 \oplus M_4$. Note that every indecomposable summand of $M$ has simple top.
We fix $A$ and $M$ as above for the rest of this article.
We show that $M$ is a $3$-cluster tilting module.
\begin{theorem}
Let $A$ be the algebra of quaternion type $Q(3 \mathcal{A})_2^2$ over a field $F$ with characteristic two.
Then $M$ is a 3-cluster tilting module.

\end{theorem}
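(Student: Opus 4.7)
The plan is to apply Theorem~\ref{clustertiltingtheorem} with $n=3$, thereby reducing the statement to a homological condition on the endomorphism algebra $B:=\End_A(M)$. Since $A$ is symmetric, its indecomposable projective modules coincide with its indecomposable injective modules, and because $A$ is a direct summand of $M$, the module $M$ is automatically both a generator and a cogenerator of $\mod-A$. Consequently, it suffices to verify that $\gldim B = 4$ and $\domdim B = 4$.

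Before starting the computation, I would use Lemma~\ref{field extensionlemma} to reduce to the prime field $\mathbb{F}_2$. Both $A$ and each module $M_1,\dots,M_4$ are defined by quiver and relations with integer (in fact $\mathbb{F}_2$) coefficients, and $A/J$ is a product of copies of the ground field $F$, hence separable. Consequently, both $\gldim B$ and $\domdim B$ are invariant under extension of scalars from $\mathbb{F}_2$ to $F$, so the homological verification only needs to be carried out for $A$ defined over $\mathbb{F}_2$.

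The computational core then consists of three steps. First, using the GAP package QPA, I would compute the spaces $\Hom_A(X,Y)$ for $X,Y$ ranging over the seven pairwise non-isomorphic indecomposable summands $P_1,P_2,P_3,M_1,M_2,M_3,M_4$ of $M$, determine the Gabriel quiver of $B$ (on seven vertices) from a basis of $\rad B/\rad^2 B$, and extract an admissible ideal of relations from the composition table of the resulting basis morphisms. Second, from this presentation I would compute minimal projective resolutions of all seven simple $B$-modules and check that each terminates at step~$4$, giving $\gldim B = 4$. Third, I would compute the initial segment of a minimal injective coresolution of the regular module $B_B$ and verify that $I_0, I_1, I_2, I_3$ are projective while $I_4$ is not, giving $\domdim B = 4$.

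The main obstacle is producing a correct and complete presentation of $B$. The algebra has seven indecomposable summands of moderate dimension and a non-trivial defining ideal with six generating relations, so both the list of generating morphisms and the admissible ideal of $B$ are of nontrivial size, and every candidate relation in $B$ must be verified as a genuine identity of $A$-linear maps (rather than just a numerical coincidence in some small example). Once a correct presentation of $B$ is in hand, the two homological checks are finite computations that can be fully delegated to the computer via QPA.
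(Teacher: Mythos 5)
Your proposal follows essentially the same route as the paper: reduce via Theorem~\ref{clustertiltingtheorem} to showing $\gldim B=\domdim B=4$ for $B=\End_A(M)$, verify this by a QPA computation of quiver and relations of $B$ (the paper computes $B^{op}$, which has the same global and dominant dimension) over the field with two elements, and pass to an arbitrary field of characteristic two by Lemma~\ref{field extensionlemma}. The only small adjustment is that the separability hypothesis in Lemma~\ref{field extensionlemma}(2) must be checked for $B/J(B)$ rather than for $A/J$ as you state; this holds automatically (since $B$ is presented by quiver and relations over the perfect field $\mathbb{F}_2$), which is exactly how the paper argues.
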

\begin{proof}
Clearly $M$ is a generator and cogenerator of $\mod-A$. 
We show that $B:=\End_A(M)$ has global dimension $4$ and dominant dimension $4$. Then, $M$ is a $3$-cluster tilting module by Theorem \ref{clustertiltingtheorem}.
First assume that $K$ has two elements.
The following QPA program calculates quiver and relations of $B^{op}$ over the field with two elements and shows that $B^{op}$ has global dimension and dominant dimension equal to $4$. We remark that GAP applies functions from the right. Thus, it calculates the opposite algebra of the endomorphism ring of $M$.
\begin{tiny}
\begin{verbatim}
LoadPackage("qpa");
k:=2;F:=GF(2);Q:=Quiver(3,[[1,2,"b"],[2,3,"d"],[2,1,"y"],[3,2,"n"]]);
kQ:=PathAlgebra(F,Q);AssignGeneratorVariables(kQ);
rel:=[b*y*b-(b*d*n*y)^(k-1)*b*d*n,y*b*y-(d*n*y*b)^(k-1)*d*n*y,
n*d*n-(n*y*b*d)^(k-1)*n*y*b,
d*n*d-(y*b*d*n)^(k-1)*y*b*d,b*y*b*d,n*d*n*y];
A:=kQ/rel; B:=Basis(A);U:=Elements(B);Display(U);n:=Size(B);
UU:=[];for i in [4..n] do Append(UU,[U[i]]);od;                            
t1:=UU[4];
M1:=RightAlgebraModuleToPathAlgebraMatModule(RightAlgebraModule(A, \*, RightIdeal(A,[t1])));
N1:=CoKernel(InjectiveEnvelope(M1));M1:=N1;
t2:=UU[33];M2:=RightAlgebraModuleToPathAlgebraMatModule(RightAlgebraModule(A, \*, RightIdeal(A,[t2])));
N2:=CoKernel(InjectiveEnvelope(M2));M2:=N2;
t3:=UU[10];
M3:=RightAlgebraModuleToPathAlgebraMatModule(RightAlgebraModule(A, \*, RightIdeal(A,[t3])));
N3:=CoKernel(InjectiveEnvelope(M3));M3:=N3;
t4:=UU[3];
M4:=RightAlgebraModuleToPathAlgebraMatModule(RightAlgebraModule(A, \*, RightIdeal(A,[t4])));
N4:=CoKernel(InjectiveEnvelope(M4));M4:=N4;
N:=DirectSumOfQPAModules([N1,N2,N3,N4]);
projA:=IndecProjectiveModules(A);RegA:=DirectSumOfQPAModules(projA);
M:=DirectSumOfQPAModules([RegA,N]);
B:=EndOfModuleAsQuiverAlgebra(M)[3];
QQ:=QuiverOfPathAlgebra(B);Display(QQ);rel:=RelatorsOfFpAlgebra(B);
gd:=GlobalDimensionOfAlgebra(B,33);dd:=DominantDimensionOfAlgebra(B,33);
\end{verbatim}
\end{tiny}

\noindent We observe that $B^{op}=K\hat{Q}/\hat{I}$ is a quiver algebra where $\hat{Q}$ is given by
\begin{center}
\begin{tikzpicture}[scale=0.65]

	\node (v1) at (1,2.5) {$\bullet^{1}$};
	\node (v2) at (-7,5.5) {$\bullet^{2}$};
	\node (v3) at (-4,2.5) {$\bullet^{3}$};
	\node (v4) at (-5.5,4) {$\bullet^{4}$};
	\node (v5) at (4.5,-6) {$\bullet^{5}$};
	\node (v6) at (1,-2.5) {$\bullet^{6}$};
	\node (v7) at (-4,-2.5) {$\bullet^{7}$};

	\draw [-open triangle 45] (v1) edge node[above] {$\alpha_1$}  (v3);
	\draw [-open triangle 45] (v1) edge[bend right] node[left] {$\alpha_2$}  (v6);
	\draw [-open triangle 45] (v2) edge[bend left] node[above] {$\alpha_3$}  (v1);
	\draw [-open triangle 45] (v2) edge[bend right] node[below left] {$\alpha_4$}  (v3);
	\draw [-open triangle 45] (v3) edge[bend right] node[above right] {$\alpha_5$}  (v2);
	\draw [-open triangle 45] (v3) edge node[below,pos=0.65] {$\alpha_6$}  (v4);
	\draw [-open triangle 45] (v3) edge node[right] {$\alpha_7$}  (v7);
	\draw [-open triangle 45] (v4) edge node[above,pos=0.05] {$\alpha_8$}  (v2);
	\draw [-open triangle 45] (v5) edge[bend right] node[above right] {$\alpha_9$}  (v6);
	\draw [-open triangle 45] (v6) edge[bend right] node[right] {$\alpha_{10}$}  (v1);
	\draw [-open triangle 45] (v6) edge[bend right] node[left] {$\alpha_{11}$}  (v5);
	\draw [-open triangle 45] (v6) edge[bend right] node[above] {$\alpha_{12}$}  (v7);
	\draw [-open triangle 45] (v7) edge[bend left] node[below left] {$\alpha_{13}$}  (v2);
	\draw [-open triangle 45] (v7) edge[bend right] node[below] {$\alpha_{14}$}  (v6);
	
\end{tikzpicture}
\end{center}
\noindent and the relations are given by
\begin{align*}
\hat{I}=\langle &\alpha_1\alpha_6, \alpha_2\alpha_{11}, \alpha_1\alpha_7 + \alpha_2\alpha_{12}, \alpha_4\alpha_5, \alpha_6\alpha_8 + \alpha_7\alpha_{13}, \alpha_8\alpha_3, \alpha_9\alpha_{10}, \alpha_2\alpha_{10} + \alpha_1\alpha_5\alpha_3, \alpha_2\alpha_{10}\alpha_1, \alpha_3\alpha_1\alpha_5,\\ &\alpha_3\alpha_2\alpha_{10},\alpha_3\alpha_2 + \alpha_4\alpha_7\alpha_{14}, \alpha_5\alpha_3\alpha_1, \alpha_5\alpha_4\alpha_6, \alpha_8\alpha_4\alpha_6, \alpha_9\alpha_{12}\alpha_{13}, \alpha_{12}\alpha_{13} + \alpha_{10}\alpha_1\alpha_5,\\ &\alpha_{10}\alpha_1\alpha_7 + \alpha_{11}\alpha_9\alpha_{12}, \alpha_{10}\alpha_2\alpha_{10} + \alpha_{12}\alpha_{13}\alpha_3, \alpha_{14}\alpha_{12} + \alpha_{13}\alpha_4\alpha_7, \alpha_{14}\alpha_{10}\alpha_2 + \alpha_{14}\alpha_{11}\alpha_9,\\ &\alpha_{13}\alpha_3\alpha_2 + \alpha_{14}\alpha_{12}\alpha_{14}, \alpha_1\alpha_7\alpha_{14}\alpha_{12}, \alpha_2\alpha_{10}\alpha_2\alpha_{10}, \alpha_3\alpha_1\alpha_7\alpha_{14}, \alpha_3\alpha_1 + \alpha_4\alpha_6\alpha_8\alpha_4,\\ &\alpha_7\alpha_{14}\alpha_{12} + \alpha_6\alpha_8\alpha_4\alpha_7, \alpha_5\alpha_4 + \alpha_7\alpha_{14}\alpha_{10}\alpha_1, \alpha_7\alpha_{14}\alpha_{12}\alpha_{13}, \alpha_9\alpha_{12}\alpha_{14}\alpha_{12}, \alpha_{10}\alpha_2\alpha_{10}\alpha_2 + \alpha_{11}\alpha_9\alpha_{11}\alpha_9,\\ &\alpha_{12}\alpha_{13}\alpha_4\alpha_6, \alpha_{12}\alpha_{14}\alpha_{12}\alpha_{13}, \alpha_{14}\alpha_{12}\alpha_{13} + \alpha_{13}\alpha_4\alpha_6\alpha_8, \alpha_{14}\alpha_{10}\alpha_2\alpha_{10}, \alpha_{13}\alpha_3\alpha_1 + \alpha_{14}\alpha_{12}\alpha_{13}\alpha_4,\\ &\alpha_{10}\alpha_2 + \alpha_{11}\alpha_9 + \alpha_{12}\alpha_{14}\alpha_{10}\alpha_1\alpha_7\alpha_{14}, \alpha_{13}\alpha_3 + \alpha_{14}\alpha_{10}\alpha_1\alpha_7\alpha_{14}\alpha_{10}\rangle.
\end{align*}

\noindent With $B^{op}$ also $B$ has global dimension and dominant dimension equal to $4$ and thus $M$ is a $3$-cluster tilting module.
Now let $F$ be an arbitrary field with characteristic two, which is an extension of the field $K$ with two elements.
We have 
$$\End_{A_F}(M \otimes_K F) \cong \End_A(M) \otimes_K F \cong B_F ,$$
which has also dominant and global dimension equal to $4$. This follows from Lemma \ref{field extensionlemma} and the fact that~$B/J$ is separable, since $B$ is a quiver algebra.
Thus $M \otimes_K F$ is also a 3-cluster tilting module of $A_F$.

\end{proof}
We remark that it took the supercomputer "nenepapa" from the TU Kaiserslautern $105$ hours to compute the endomorphism ring of $M$. The data of this supercomputer are as follows. Compute-Server Linux (Gentoo): Dell PowerEdge R730, 2x Intel Xeon E5-2697AV4 2.6 GHz, Turbo 3.60 GHz, 40 MB SmartCache, 32 Cores, 64 Threads, 768 GB RAM.\newline\newline
\indent As remarked earlier, the principal block of the group algebra $KG$ for $G=SL(2,5)$ over a splitting field $K$ of characteristic two is Morita equivalent to the algebra of quaternion type $Q(3 \mathcal{A})_2^2$. As a corollary of the previous Theorem we obtain our main result: 
\begin{corollary}
Let $G=SL(2,5)$ and $K$ be a field of characteristic two that is a splitting field for $KG$.
Then the principal block of $KG$ has a $3$-cluster tilting module.
\end{corollary}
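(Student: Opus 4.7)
The plan is to derive the corollary directly from the previously established theorem, so essentially no new work is needed beyond citing the relevant Morita equivalence. The statement concerns the principal block $B_0(KG)$ where $G = SL(2,5)$ and $K$ is a splitting field of characteristic two. My strategy is to show that this block is already covered, up to Morita equivalence, by the algebra $A = Q(3\mathcal{A})_2^2$ handled in the preceding theorem.

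First I would invoke the classification of blocks with quaternion defect groups: as already noted in the paper (with reference to page $110$ of \cite{H} and section $7$ of \cite{E2}), the principal block of $KSL(2,5)$ over a splitting field $K$ of characteristic two is Morita equivalent to the algebra of quaternion type $Q(3\mathcal{A})_2^2$. This is the key input and I would simply cite it rather than reprove it.

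Next, I would use the Morita invariance of the existence of $n$-cluster tilting modules, a fact explicitly recalled in the introduction of the paper. Since the preceding theorem establishes that the algebra $A = Q(3\mathcal{A})_2^2$ admits the $3$-cluster tilting module $M = A \oplus M_1 \oplus M_2 \oplus M_3 \oplus M_4$ over any field $F$ of characteristic two (including therefore our splitting field $K$), the corresponding image under a Morita equivalence yields a $3$-cluster tilting module over the principal block $B_0(KG)$.

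There is essentially no obstacle here: the content is entirely in the preceding theorem together with the known Morita equivalence. The only thing worth being careful about is to make sure the field hypothesis matches, namely that the preceding theorem was proved for arbitrary fields of characteristic two (which it was, via the base change argument using Lemma \ref{field extensionlemma}), so that any splitting field $K$ of characteristic two for $KG$ is admissible. With this observation the corollary follows in one line.
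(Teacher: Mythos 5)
Your proposal is correct and follows exactly the paper's own argument: the corollary is deduced from the preceding theorem via the Morita equivalence between the principal block and $Q(3\mathcal{A})_2^2$ (citing \cite{H} and \cite{E2}) together with Morita invariance of the existence of cluster tilting modules, with the field hypothesis already covered by the base change part of the theorem. No differences worth noting.
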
 

Note that not every algebra of quaternion type has a cluster tilting module.
In fact, the group algebra $KG$ of the quaternions $G$ of order $8$ over a field $K$ with characteristic two has no cluster tilting modules, since it is representation-infinite and we have $\Ext_{KG}^1(M,M) \neq 0$ for every non-projective $KG$-module $M$ by a result of Tachikawa, see \cite[Theorem 8.6]{T}.
\section*{Acknowledgements} 
We thank Karin Erdmann for having informed us in private communication that she has also found a~$3$-cluster tilting module for another algebra of quaternion type which is not a block of a group algebra. We thank Thorsten Holm for providing a reference to his habilitation thesis.
Bernhard B\"ohmler gratefully acknowledges funding by the DFG (SFB/TRR 195). Ren{\'e} Marczinzik gratefully acknowledges funding by the DFG (with project number 428999796). We profited from the use of the GAP-package \cite{QPA}.

\end{document}